\theoremstyle{plain}
\DeclareMathOperator{\lt}{L}
\DeclareMathOperator{\FM}{FM}
\DeclareMathOperator{\Q}{{\sf Q}}
\begin{document}

\newtheorem{thm}{Theorem}   
\newtheorem{lemma}[thm]{Lemma}   
\newtheorem{proposition}[thm]{Proposition}   
\newtheorem{theorem}[thm]{Theorem}   
\newtheorem{corollary}[thm]{Corollary}   
\newtheorem{pro}[thm]{Proposition}
\newtheorem{cor}[thm]{Corollary}
\newtheorem{lem}[thm]{Lemma}
\newtheorem{claim}[thm]{Claim}
\newtheorem{fact}[thm]{Fact}
\newtheorem{definition}[thm]{Definition}
\newtheorem{observation}[thm]{Observation}
\newcommand{\red}{\textcolor[rgb]{1.00,0.00,0.00}}

\newcommand{\lab}[1]{\label{#1}\marginpar{\footnotesize #1}} 
\renewcommand{\labelenumi}{{\textrm{(\roman{enumi})}}}
\newcommand{\vep}{\varepsilon} 
\newcommand{\One}{1} 
\newcommand{\Zero}{0} 
  \newcommand{\mb}[1]{\mathbb{#1}}
  \newcommand{\mc}[1]{\mathcal{#1}}
  \renewcommand{\phi}{\varphi}
\newcommand{\Qm}{\Q \mc{M}_f}
\newcommand{\et}{\wedge}
\newcommand{\imp}{\rightarrow  }
 \newcommand{\ex}{\exists  }
 \newcommand{\all}{\forall  }
\newcommand{\Mod}{{\sf Mod } }
\newcommand{\al  }{\alpha }
\newcommand{\be  }{\beta }
\newcommand{\ga  }{\gamma }
\newcommand{\de  }{\delta }
\newcommand{\Si}{\Sigma  }
\newcommand{\no}{\noindent}
\newcommand{\sub}{\subseteq}
\renewcommand{\th}{\theta}
\newcommand{\eqq}{\mb{E}}
\newcommand{\xc}[1]{}

\title[Finiteness problem]{On the Finiteness Problem
for classes of modular lattices}

\author[C. Herrmann]{Christian Herrmann}
\address{Technische Universit\"{a}t Darmstadt FB4\\Schlo{\ss}gartenstr. 7\\64289 Darmstadt\\Germany}
\email{herrmann@mathematik.tu-darmstadt.de}

\dedicatory{Dedicated to the memory of Rudolf Wille}

\subjclass{06C05, 03D35}
\keywords{Finiteness problem, modular lattice}

\begin{abstract}
The Finiteness Problem is
shown to be unsolvable for any sufficiently large class of modular lattices.
\end{abstract}

\maketitle

Given a class $\mc{A}$ of algebraic structures,
the \emph{Finiteness Problem}
is to decide for any given finite presentation, that is a list of
generator symbols and relations,
whether or not there is a finite bound on the size
of members of the class which  
'admit the presentation', that is 
a system of generators
satisfying the given relations; if $\mc{A}$ is
a quasi-variety, this means finiteness of the free $\mc{A}$-algebra
given by the presentation.
Due to Slavik \cite{slavik}, the finiteness problem is
algorithmically solvable for the class of all lattices,
due to Wille \cite{wille} for any class of modular lattices,
containing the subspace lattice of an infinite projective
plane, if one allows only order relations between the generators.
The present note relies on the unsolvability
of the Triviality Problem for modular lattices \cite{sat}
which in turn relies on the result of Adyan \cite{adyan,adyan2} and 
Rabin \cite{rabin} for groups.
For a vector space $V$ let $\lt(V)$ denote the lattice of subspaces.

\begin{theorem}\label{1}
Let $\mc{A}$ a class of 
modular lattices such that  $\lt(V)\in \mc{A}$
for some $V$ of infinite dimension.
Then the Finiteness Problem for 
$\mc{A}$ is algorithmically  unsolvable.
\end{theorem}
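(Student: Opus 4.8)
The plan is to reduce from the Triviality Problem for modular lattices, which is unsolvable by \cite{sat}. Write $F(P)$ for the modular lattice presented by a finite presentation $P=\langle X\mid R\rangle$. I would first record a recursion-theoretic remark: there is no algorithm deciding, for finite $P$, whether $F(P)$ is \emph{finite}. Indeed, were there one, the Triviality Problem would be solvable --- given $P$, test finiteness of $F(P)$; if $F(P)$ is infinite it is nontrivial, while if it is finite its word problem is decidable (for a finitely presented algebra known to be finite, both ``$s=t$ in $F(P)$'', by enumerating consequences of the relations, and ``$s\neq t$ in $F(P)$'', by enumerating finite modular lattices together with relation-respecting homomorphisms of the free lattice that separate $s$ and $t$ --- $F(P)$ itself being available as such a separator --- are recursively enumerable), so one tests the instance $\bigwedge X=\bigvee X$. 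In particular, when $\mc{A}$ is the class of all modular lattices (so that a member admits $P$ exactly when it is a quotient of $F(P)$) the Finiteness Problem already \emph{is} the question ``is $F(P)$ finite?'', hence unsolvable; the point is to transfer this to an arbitrary $\mc{A}$ containing some $\lt(V)$ with $\dim V$ infinite.

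Let $F_0\subseteq D$ be the prime field of the coordinatizing division ring $D$ of $V$. Given $P$, I would effectively construct a presentation $P'$ by adjoining a von Neumann $n$-frame (for a fixed $n\geq 4$), decreed spanning, together with relations forcing its coordinate ring $\Lambda$ to be an $F_0$-algebra into which the data of $P$ is faithfully encoded, arranged so that $F(P')\cong\lt_\Lambda(\Lambda^n)$ --- the lattice of submodules of $\Lambda^n$ --- and, crucially, $F(P')$ is finite if and only if $F(P)$ is finite. This is the frame-coordinatization machinery underlying \cite{sat} (and, behind it, the Adyan--Rabin theorem), carried out over $F_0$ in place of $\mathbb{Z}$; if the construction of \cite{sat} already has this shape, with the stated control on finiteness, one may lift $P'$ from it directly. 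Since $F(P')$ is finitely generated it is countable, hence $\Lambda$ is a countable $F_0$-algebra, and $\lt_\Lambda(\Lambda^n)$ is a $\{\Zero,\One\}$-sublattice of the $F_0$-subspace lattice $\lt_{F_0}(\Lambda^n)$. As $\dim_{F_0}\Lambda^n\leq\aleph_0\leq\dim V$ and $D$ is faithfully flat over $F_0$, extension of scalars $-\otimes_{F_0}D$ (preserving joins, and meets by flatness) followed by restriction to a suitable interval identifies $\lt_{F_0}(\Lambda^n)$, hence $F(P')$, with a sublattice of $\lt(V)$.

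Combining these: if $F(P)$ is finite then $F(P')$ is finite, and every system of elements satisfying the relations of $P'$ in a member of $\mc{A}$ generates a sublattice of size at most $|F(P')|$; whereas if $F(P)$ is infinite then $\lt(V)$ itself --- a member of $\mc{A}$ --- contains, as the image of an embedding of the infinite lattice $F(P')$, a system of generators satisfying $P'$, so there is no finite bound. Hence a decision procedure for the Finiteness Problem over $\mc{A}$ would decide finiteness of $F(P)$, which is impossible by the remark of the first paragraph. The genuinely hard step is the coordinatization: one must encode an arbitrary modular-lattice presentation in the coordinate ring of a spanning frame faithfully enough that \emph{finiteness is exactly preserved}, and must do so over the prescribed prime field --- the latter requiring care, since tensoring a finite ring up to $F_0$ can annihilate it. The remaining steps are routine.
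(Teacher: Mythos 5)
Your first paragraph is sound: the Turing reduction from the Triviality Problem to ``is $F(P)$ finite?'' (if $F(P)$ is finite, its word problem is decidable by the two semi-decision procedures you describe, so one can test $\bigwedge X=\bigvee X$) does show that finiteness of finitely presented modular lattices is undecidable, which settles the case where $\mc{A}$ is the variety of all modular lattices. The gap is in the transfer to a general $\mc{A}$ containing some $\lt(V)$: everything there rests on the claim that one can effectively produce $P'$ with $F(P')\cong\lt_\Lambda(\Lambda^n)$, over the prime field of the coordinatizing division ring, with $F(P')$ finite if and only if $F(P)$ is finite. You flag this yourself as ``the genuinely hard step'' and do not supply it, and it cannot simply be lifted from \cite{sat}. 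That paper (see Lemma~\ref{2} here) controls \emph{satisfiability} of certain frame/ring configurations in some modular lattice and in $\lt(V)$; it says nothing about the isomorphism type of the finitely presented lattice itself. A finitely presented modular lattice need not embed into any $\lt(V)$ at all (free modular lattices on sufficiently many generators violate the Arguesian law), so arranging for $F(P')$ to be an entire submodule lattice, with finiteness exactly preserved and over a prescribed prime field, would be a substantial new theorem rather than a routine adaptation. As written, the proof is incomplete.

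The paper avoids this difficulty entirely: it needs structural information about only one fixed gadget, $\FM(J_4^1)$ of Lemma~\ref{3}, which is infinite, embeds into every infinite-dimensional $\lt(V)$, and has a unique proper congruence with quotient the six-element $M_4$. Splicing the unknown presentation $\phi$ into its critical prime quotient via $x_\top=y_\top$ and $x_\bot=y_1+y_2$ makes any lattice admitting $\phi^\#$ finite exactly when that prime quotient collapses, i.e.\ when $x_\bot=x_\top$; and satisfiability of $\phi^\exists$ is realized inside $\lt(V/U)$, hence inside $\lt(V)$, by Lemma~\ref{2}(i). This uses only the satisfiability-transfer statement actually proved in \cite{sat}. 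To salvage your route you would have to either prove the coordinatization theorem you are assuming or replace it by a gadget of this kind.
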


The following restates the relevant part of Lemma 10 in \cite{sat}.
\begin{lemma}\label{2}
There is a recursive set $\Sigma$ of conjunctions 
$\phi(\bar x,x_\bot,x_\top)$
of lattice equations 
such that 
 $\forall \bar x \forall x_\bot
\forall x_\top.\;\phi(\bar x,x_\bot,x_\top) \Rightarrow
\bigwedge_i   x_\bot \leq x_i \leq x_\top$
is valid in all modular lattices
and such that the  following hold
where  $\phi^\exists$ denotes the sentence
$\exists \bar x\exists x_\bot\exists x_\top.\;\phi(\bar x,x_\bot,x_\top)\wedge x_\bot\neq x_\top$. 
 \begin{enumerate}
\item 
 If, for    $\phi\in      \Sigma$, $\phi^\exists$
 is valid in  some  modular lattice, then
it is so  within 
 $\lt(V)$
for any $V$ of infinite dimension. Moreover, one can choose
$x_\bot=0$ and $x_\top=V$.
\item The set of all $\phi \in \Sigma$ 
with $\phi^\exists$ valid in some modular
lattice is not recursive.
\end{enumerate}
 \end{lemma}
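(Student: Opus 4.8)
This is a restatement of (the relevant half of) Lemma~10 in \cite{sat}; the proof is to invoke that lemma, and for orientation I recall what its proof does and where the weight lies.

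\emph{The reduction.} Fix a computable enumeration $\mathcal P_0,\mathcal P_1,\dots$ of finite group presentations and an integer $n\ge 4$. One constructs, uniformly and recursively in $e$, a conjunction $\phi_e(\bar x,x_\bot,x_\top)$ of lattice equations whose variables $\bar x$ consist of a von~Neumann $n$-frame together with variables naming elements of its coordinate ring; the equations say (a) that the $\bar x$ do form such a frame with least element $x_\bot$ and greatest element $x_\top$ --- whence $x_\bot\le x_i\le x_\top$ for every $i$ is forced in every modular lattice, the universally valid implication of the statement --- and (b), through the coordinatization calculus, they transcribe the defining relations of the integral group ring of $\mathcal P_e$ in such a way that (this is the real content of \cite{sat}) a modular lattice satisfies $\phi_e$ with $x_\bot\neq x_\top$ exactly when the group $G_e$ presented by $\mathcal P_e$ is nontrivial. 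Put $\Sigma=\{\phi_e:e\ge 0\}$; this is recursive, $e\mapsto\phi_e$ is computable, and by the Adyan--Rabin theorem \cite{adyan,adyan2,rabin}, triviality being a Markov property of finitely presented groups, the set $\{e:G_e\neq 1\}$ is not recursive. Hence $\{\phi\in\Sigma:\phi^\exists\text{ holds in some modular lattice}\}$ is not recursive, which is clause (ii).

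\emph{Realization inside $\lt(V)$.} Assume $\phi=\phi_e$ and that $\phi^\exists$ holds in some modular lattice, so $G:=G_e$ is nontrivial; being finitely presented, $G$ is countable. Let $V$ have infinite dimension $\kappa$ over a field $K$. Since $\kappa$ is infinite and $n$ finite, $\kappa=n\kappa$, so $V\cong W^n$ with $\dim_K W=\kappa$; and since $|G|\le\aleph_0\le\kappa$, we have $\dim_K K[G]^{(\kappa)}=\kappa$, so $W$ can be endowed with the structure of a free, hence faithful, $K[G]$-module of rank $\kappa$. The standard $n$-frame of $\lt(W^n)=\lt(V)$ spans $V$, so its least element is $0$ and its greatest element is $V$, and its coordinate ring is $\End_K(W)$, into which $K[G]$ --- and with it every ring element named in $\phi$ --- embeds by the left regular action; interpreting $\bar x$ by this frame and by coordinates lying in the subring $K[G]$ satisfies all the equations of $\phi$, and the interpretation is nondegenerate, $x_\bot=0\neq V=x_\top$, because $G$ is nontrivial and $W$ is faithful. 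Thus $\phi^\exists$ holds in $\lt(V)$ with $x_\bot=0$ and $x_\top=V$, which is clause (i).

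The coordinatization bookkeeping in the reduction --- that the frame and ring equations are faithfully rendered and that a collapse $x_\bot=x_\top$ in a modular model is equivalent to $G_e=1$ --- is precisely what \cite{sat} establishes, and I would quote it rather than reprove it. The one step that genuinely uses the hypothesis that $V$ be infinite-dimensional, and hence the place to be careful, is the cardinal arithmetic above: every infinite-dimensional $V$, over every field, carries a faithful free $K[G]$-module structure of $K$-dimension exactly $\dim V$, so that a \emph{spanning} $n$-frame whose coordinate ring contains $K[G]$ is available inside $\lt(V)$ itself --- which is what pins the designated bounds to $0$ and $V$ rather than merely to some interval.
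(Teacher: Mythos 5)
Your proposal takes the same route as the paper, which gives no proof of this lemma beyond the remark that it restates the relevant part of Lemma~10 of \cite{sat}; invoking that result is exactly what is intended. Your supplementary sketch of the underlying Adyan--Rabin reduction via $n$-frames and of the realization inside $\lt(V)$ by writing $V\cong W^n$ and embedding the group ring into the coordinate ring $\End_K(W)$ is consistent with the construction in \cite{sat}, including the point that the frame spans $V$ so that $x_\bot=0$ and $x_\top=V$.
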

Consider the conjunction $\pi(\bar y,y_\bot,y_\top)$
of the  following lattice equations
\[ y_i\cdot y_j=y_\bot \;(1\leq i<j\leq 4),\quad
y_i+y_j=y_\top\;(1\leq i<j\leq 4,\,j\neq 2)  \]
We use $x,y,\ldots$  both as  variables and generator
symbols and also to denote their values under 
a particular assignment.   
In  \cite{dhw}, $\FM(J_4^1)$
was defined as the modular lattice freely generated under the
presentation  $\pi(\bar y,y_\bot,y_\top)$ (equivalently, by 
the partial lattice $J^4_1$ arising from the 
$6$-element height $2$ lattice $M_4$ 
with atoms $y_1,y_2,y_3,y_4$ keeping all joins and meets 
except the join of $\{y_1,y_2\}$).
The following was shown
(to prove  (i) 
consider $V$ the direct sum of
infinitely many   subspaces of dimension $\aleph_0$).

\begin{lemma}\label{3}
Up to isomorphism, $M_4$ and singleton are the
 only proper homomorphic images of
$\FM(J_4^1)$.
Moreover, $\FM(J_4^1)$   has the following properties:
\begin{enumerate}
\item
$\FM(J_4^1)$  embeds into $\lt(V)$
for any $V$ of infinite dimension. 
Moreover, the embedding can be chosen such that
any prime quotient has infinite index.
\item $\FM(J_4^1)$ has
infinite height.
\item $\FM(J_4^1)$ has prime quotient 
$y_\top/(y_1+y_2)$, generating the
unique proper congruence
relation $\theta$.  
\item $\FM(J_4^1)/\theta$ is isomorphic to $M_4$.
\end{enumerate}
\end{lemma}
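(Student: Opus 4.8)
The plan is to pin down $\Con(\FM(J_4^1))$ and, hand in hand, to construct the embeddings of~(i). Write $L:=\FM(J_4^1)$, presented by $\pi$. The lattice $M_4$ satisfies $\pi$ — indeed the stronger set $\pi\cup\{y_1+y_2=y_\top\}$, with its four atoms as the $y_i$; adjoining the relation $y_1+y_2=y_\top$ to the presentation of $L$ yields $L/\th$ where $\th:=\Con(y_1+y_2,y_\top)$, and a routine term reduction — by $y_i\cdot y_j=y_\bot$, $y_i+y_j=y_\top$ for all $i\ne j$ and modularity every term in the $y_i$ reduces to one of $y_\bot,y_1,\dots,y_4,y_\top$ — together with the fact that $M_4$ realises six distinct such values gives $L/\th\cong M_4$; this is~(iv). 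Recall that $M_4$ is simple. It now suffices to prove \textbf{(A)} the embeddings of~(i) and \textbf{(B)} that $\th$ is the monolith of $L$, i.e.\ every non-trivial congruence of $L$ collapses $y_\top/(y_1+y_2)$. Granting (A) and (B), $\Con L$ consists of $\Delta$ and the congruences above $\th$, which correspond to $\Con M_4=\{\Delta,\nabla\}$; so $\Con L=\{\Delta,\th,\nabla\}$, the only proper homomorphic images of $L$ are $L/\th\cong M_4$ and the singleton, and $\th$ is the only congruence of $L$ distinct from $\Delta$ and $\nabla$ — this is~(iii) apart from the word ``prime''.

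For~(A), following the hint pick $W$ with $\dim W=\dim V$, set $Y=\bigoplus_{n\in\mathbb{Z}}W_n$ with every $W_n\cong W$, and identify $V$ with $Y\oplus Y$. Let $f$ be the shift automorphism of $Y$ sending $W_n$ onto $W_{n+1}$; then $f-1$ is injective, $\operatorname{im}(f-1)$ is the sum-zero subspace, and $\operatorname{coker}(f-1)\cong W$ is infinite dimensional. Put $Y_3=Y\oplus 0$, $Y_4=0\oplus Y$, $Y_1=\{(y,fy):y\in Y\}$, $Y_2=\{(y,y):y\in Y\}$. A direct check shows the $Y_i$ satisfy $\pi$: the six pairwise meets vanish since $f$ and $f-1$ are injective, and each pairwise sum other than $Y_1+Y_2$ equals $V$ since $f$ is onto; moreover $Y_1+Y_2=\{(a,b):b-a\in\operatorname{im}(f-1)\}$, so $V/(Y_1+Y_2)\cong\operatorname{coker}(f-1)$ is infinite dimensional. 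Hence $y_i\mapsto Y_i$ defines a surjection $q\colon L\to L'$ onto the sublattice $L'\subseteq\lt(V)$ generated by the $Y_i$. By the first paragraph $\ker q\in\{\Delta,\th,\nabla\}$; it is not $\nabla$ since $L'$ is non-trivial, and not $\th$ since $L'\cong M_4$ would force $Y_1+Y_2=V$, contrary to construction; so $q$ is an embedding, proving~(i) with $x_\bot=0$, $x_\top=V$ as required.

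This copy also settles~(ii) and the ``moreover'' of~(i). For~(ii): setting $c^{(0)}=Y_1+Y_2$ and $c^{(m+1)}=Y_1\cdot(Y_3+Y_4\cdot c^{(m)})+Y_2\cdot(Y_3+Y_4\cdot c^{(m)})$, one computes $Y_3\cdot c^{(m)}=\operatorname{im}((f-1)^{m+1})\oplus 0$, so the subspaces $\operatorname{im}((f-1)^k)\oplus 0$ all lie in $L'$, and they strictly descend because each $\operatorname{im}((f-1)^k)/\operatorname{im}((f-1)^{k+1})\cong\operatorname{coker}(f-1)$ is non-zero; hence $L$ has infinite height. For the ``moreover'': the explicit description of $L'$ shows that every cover of $L'$ is projective, via Dedekind transpositions, to $V/(Y_1+Y_2)$, and since transposed quotients in $\lt(V)$ have isomorphic subquotients, every cover of this embedded copy has the infinite index $\dim\operatorname{coker}(f-1)$.

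The main obstacle is~(B), together with the (logically prior) claim in~(iii) that $y_\top/(y_1+y_2)$ really is a prime quotient of $L$. For the prime-quotient claim one inspects $L'$: under $V\to V/(Y_1+Y_2)$ the four generators map to $0$, $0$, $V/(Y_1+Y_2)$, $V/(Y_1+Y_2)$, joins pass to this quotient while meets only shrink, and no lattice term yields a proper non-zero image — so nothing of $L'$ lies strictly between $Y_1+Y_2$ and $V$. For~(B) one shows that every non-trivial quotient $[u,v]$ of $L$ projects weakly onto $y_\top/(y_1+y_2)$, whence $\th=\Con(y_1+y_2,y_\top)\le\Con(u,v)$: the relations $\pi$ make $y_1,\dots,y_4$ an $M_4$-frame, and a collapse $u\equiv v$ can be transported by a chain of Dedekind transpositions against the $y_i$ into a collapse of $y_\top/(y_1+y_2)$. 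Both of these are part of the structural analysis of $\FM(J_4^1)$ carried out in~\cite{dhw}, which one may simply invoke.
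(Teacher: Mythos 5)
The paper offers no proof of this lemma at all: it is quoted from \cite{dhw} with a single parenthetical hint for (i) (``consider $V$ the direct sum of infinitely many subspaces of dimension $\aleph_0$''). Your proposal therefore does strictly more than the paper: the four-subspace realization $Y_1=\{(y,fy)\}$, $Y_2=\{(y,y)\}$, $Y_3=Y\oplus 0$, $Y_4=0\oplus Y$ with $f$ the shift is a correct instance of that hint (all six meets are $0$ since $f$ and $f-1$ are injective, the five required joins are $V$ since $f$ is onto, and $V/(Y_1+Y_2)\cong\operatorname{coker}(f-1)\cong W$ is infinite-dimensional), the reduction of (iv) to the six-element closure of $\{y_\bot,y_1,\dots,y_4,y_\top\}$ is sound, the derivation of $\Con L=\{\Delta,\theta,\nabla\}$ from (B) plus simplicity of $M_4$ is the right logical skeleton (and, as you note, only (B) is needed there, so there is no circularity when you then use it to show $\ker q=\Delta$), and the chain $\operatorname{im}((f-1)^k)\oplus 0$ correctly gives (ii). Your residual appeals to \cite{dhw} — for (B) and for primality of $y_\top/(y_1+y_2)$ — cover exactly the facts the paper itself takes from that reference, so this is legitimate.

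Two caveats on the sketches you do offer in place of those citations. First, your argument that nothing of $L'$ lies strictly between $Y_1+Y_2$ and $V$ does not stand on its own: the map $h(U)=(U+(Y_1+Y_2))/(Y_1+Y_2)$ is only a join-homomorphism, so for a term $t$ one gets $h(t(\bar Y))\le t(h(\bar Y))$; when the right side is all of $V/(Y_1+Y_2)$ this yields no upper bound, so it does not exclude intermediate values. Second, the ``moreover'' of (i) rests on the assertion that \emph{every} cover of $L'$ is projective to $V/(Y_1+Y_2)$, which you state but do not derive (it does not follow formally from $\theta$ being the monolith, since a prime quotient could a priori generate $\nabla$). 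Both points genuinely require the structural description of $\FM(J_4^1)$ from \cite{dhw}, so your final sentence deferring to that paper is doing real work and should be kept; with it, the proposal is acceptable and is in fact more informative than the source.
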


\begin{proof}{ of Theorem 1}.
Given $\phi \in \Sigma$ from Lemma~\ref{2}, consider the presentation
 $\phi^\#$ with  generators $\bar x,x_\bot,x_\top,\bar y,y_\bot,y_\top$
and the relations from $\phi$, $\pi$, and in addition
$x_\top=y_\top$ and  $x_\bot=y_1+y_2$.
Considering a modular lattice $L$ with generators and relations
according to $\phi^\#$,
the following are equivalent in view of Lemma~\ref{3}.
\begin{enumerate}
\item  $x_\bot =x_\top$.
\item $L$ is singleton or $M_4$.
\item $L$ is finite.
\item $L$ is of finite height.
\end{enumerate}
Clearly, if $x_\bot=x_\top$ in every modular lattice
admitting presentation $\phi$ then the same applies
to the presentation $\phi^\#$.
 On the other hand, assume that $\phi^\exists$ is valid in some 
modular lattice.
Given  any  vector space $V$, embed $\FM(J_4^1)$ into $\lt(V)$ as in  
(i) of Lemma~\ref{3} and denote $U=y_1+y_2$.
 By (i) of Lemma~\ref{2} one can evaluate 
$\bar x$ in $\lt(V/U)$ such that $\phi(\bar x,x_\bot,x_\top)$ holds
where $x_\bot=U$ and $x_\top=V$. This results into generators of
a sublattice $L$ of $\lt(V)$ satisfying the relations of $\phi^\#$
and such that $x_\bot \neq x_\top$.
Thus, to decide whether
$x_\bot=x_\top$ for all modular lattices admitting presentation $\phi$
 reduces to deciding 
whether 
(i)--(iv) apply to all
 $L \in \mc{A}$ admitting presentation $\phi^\#$. Undecidability 
of the latter problems follows now from (ii) of Lemma~\ref{2}.
\end{proof}

\begin{corollary} For no quasi-variety  $\mc{A}$ as in  Theorem \ref{1} there is
an
algorithm to decide, given a finite presentation,  whether or not
the lattice freely generated in $\mc{A}$ under that presentation is of finite height.
\end{corollary}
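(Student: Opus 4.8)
The plan is to re-run the reduction in the proof of Theorem~\ref{1}, this time keeping track of the \emph{free} lattice attached to the presentation. Fix a quasi-variety $\mc{A}$ with $\lt(V)\in\mc{A}$ for some $V$ of infinite dimension, and for a finite presentation $P$ write $F_{\mc{A}}(P)$ for the lattice freely generated in $\mc{A}$ under $P$. Since $\mc{A}$ is a quasi-variety, $F_{\mc{A}}(P)$ exists and has the usual universal property: for lattice terms $s,t$ in the generator symbols of $P$, the equation $s=t$ holds in $F_{\mc{A}}(P)$ if and only if it holds in every $L\in\mc{A}$ admitting $P$ (one direction is evaluation; the other uses closure of $\mc{A}$ under products and sublattices). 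Starting from $\phi\in\Sigma$ as in Lemma~\ref{2}, I would form the presentation $\phi^\#$ exactly as in the proof of Theorem~\ref{1}; the assignment $\phi\mapsto\phi^\#$ is visibly computable and $\Sigma$ is recursive.

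First I would observe that $F_{\mc{A}}(\phi^\#)$ is itself a modular lattice admitting $\phi^\#$, so the equivalences (i)--(iv) of that proof apply to it; in particular, by (i)$\Leftrightarrow$(iv), the lattice $F_{\mc{A}}(\phi^\#)$ is of finite height if and only if $x_\bot=x_\top$ holds in $F_{\mc{A}}(\phi^\#)$. By the universal property, the latter equation holds in $F_{\mc{A}}(\phi^\#)$ precisely when $x_\bot=x_\top$ holds in every $L\in\mc{A}$ admitting $\phi^\#$.

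Next I would recycle the two implications already present in the proof of Theorem~\ref{1}. Since $\phi^\exists$ fails in every modular lattice exactly when $x_\bot=x_\top$ holds in every modular lattice admitting $\phi$, the ``Clearly'' step there shows that in this case $x_\bot=x_\top$ holds in every modular lattice, hence in every $L\in\mc{A}$, admitting $\phi^\#$. Conversely, if $\phi^\exists$ holds in some modular lattice, the construction in that proof yields, for $V$ of infinite dimension, a sublattice $L$ of $\lt(V)$ admitting $\phi^\#$ with $x_\bot\neq x_\top$; as quasi-varieties are closed under sublattices and $\lt(V)\in\mc{A}$, we get $L\in\mc{A}$. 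Together with the previous paragraph this gives: $F_{\mc{A}}(\phi^\#)$ is of finite height if and only if $\phi^\exists$ is valid in no modular lattice.

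Finally, an algorithm deciding, for an arbitrary finite presentation $P$, whether $F_{\mc{A}}(P)$ is of finite height would, composed with the computable map $\phi\mapsto\phi^\#$ on the recursive set $\Sigma$, decide which $\phi\in\Sigma$ have $\phi^\exists$ valid in some modular lattice, contradicting Lemma~\ref{2}(ii). I expect no genuine obstacle beyond the proof of Theorem~\ref{1}: the new ingredients are only the universal property of $F_{\mc{A}}(P)$ in a quasi-variety and the remark --- immediate from clauses (iii)/(iv) there --- that for this free lattice ``finite height'' coincides with ``$x_\bot=x_\top$''; the membership $L\in\mc{A}$ is mere closure under substructures. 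The one point deserving care is stating the universal property precisely in the presence of relations, which is routine.
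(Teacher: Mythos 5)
Your proof is correct and is essentially the argument the paper intends: the corollary is stated without a separate written proof precisely because it follows by applying the equivalences (i)--(iv) from the proof of Theorem~\ref{1} to the free lattice $F_{\mc{A}}(\phi^\#)$ itself and invoking the universal property of free algebras in a quasi-variety, exactly as you do. Your reconstruction, including the closure of $\mc{A}$ under sublattices to get $L\in\mc{A}$, fills in the intended details faithfully.
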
 

\end{document}